\newtheorem{thm}{Theorem}
\newtheorem{lem}[thm]{Lemma}
\def\R{\mbox{\ensuremath{\mathbb R}}\xspace}
\def\C{\mbox{\ensuremath{\mathcal C}}\xspace}
\def\F{\mbox{\ensuremath{\mathcal F}}\xspace}
\def\H{\mbox{\ensuremath{\mathcal H}}\xspace} 
\def\eps{\mbox{\ensuremath{\varepsilon}}\xspace}
\DeclareMathSymbol{\lsb@l}{\mathalpha}{letters}{`l}
\begin{document}

\title{Radon numbers grow linearly}
\author{D\"om\"ot\"or P\'alv\"olgyi\footnote{MTA-ELTE Lend\"ulet Combinatorial Geometry Research Group, Institute of Mathematics, E\"otv\"os Lor\'and University (ELTE), Budapest, Hungary.
Research supported by the Lend\"ulet program of the Hungarian Academy of Sciences (MTA), under grant number LP2017-19/2017.}}

\maketitle

\begin{abstract}
Define the $k$-th Radon number $r_k$ of a convexity space as the smallest number (if it exists) for which any set of $r_k$ points can be partitioned into $k$ parts whose convex hulls intersect.
Combining the recent abstract fractional Helly theorem of Holmsen and Lee with earlier methods of Bukh, we prove that $r_k$ grows linearly, i.e., $r_k\le c(r_2)\cdot k$.
\end{abstract}

\medskip

\section{Introduction}

Define a \emph{convexity space} as a pair $(X,\C)$, where $X$ is any set of points and $\C$, the collection of convex sets, is any family over $X$ that contains $\emptyset, X$, and is closed under intersection.
The convex hull, $conv(S)$, of some point set $S\subset X$ is defined as the intersection of all convex sets containing $S$, i.e., $conv(S)=\cap\{C\in\C\mid S\subset C\}$; since \C is closed under intersection, $conv(S)$ is the minimal convex set containing $C$.
This generalization of convex sets includes several examples; for an overview, see the book by van de Vel \cite{Vel} or for a more recent work, \cite{MoranY}.
It is a natural question what properties of convex sets of $\R^d$ are preserved or what the relationships are among them for general convexity spaces.
A much investigated function is the \emph{Radon number} $r_k$ (sometimes also called \emph{partition number} or \emph{Tverberg number}), which is defined as the smallest number (if it exists) for which any set of $r_k$ points can be partitioned into $k$ parts whose convex hulls intersect.
For $k=2$, we simply write $r=r_2$.

In case of the convex sets of $\R^d$, it was shown by Radon \cite{Radon} that $r=d+2$ and by Tverberg \cite{Tverberg} that $r_k=(d+1)(k-1)+1$.
Calder \cite{Calder} and Eckhoff \cite{Eckhoff79} raised the question whether $r_k\le (r-1)(k-1)+1$ also holds for general convexity spaces (when $r$ exists), and this became known as Eckhoff's conjecture.
It was shown by Jamison \cite{Jamison} that the conjecture is true if $r=3$, and that the existence of $r$ always implies that $r_k$ exists and $r_k\le r^{\lceil \log_2 k\rceil}\le (2k)^{\log_2 r}$.
His proof used the recursion $r_{kl}\le r_kr_l$ which was later improved by Eckhoff \cite{Eckhoff00} to $r_{2k+1}\le (r-1)(r_{k+1}-1)+r_k+1$, but this did not significantly change the growth rate of the upper bound.
Then Bukh \cite{Bukh} has disproved the conjectured $r_k\le (r-1)(k-1)+1$ by showing an example where $r=4$, but $r_k\ge 3k-1$ (just one more than the conjectured value) and has also improved the upper bound to $r_k= O(k^2\log^2 k)$, where the hidden constant depends on $r$.
We improve this to $r_k=O(k)$, which is optimal up to a constant factor and might lead to interesting applications.

\begin{thm}\label{main}
	If a convexity space $(X,\C)$ has Radon number $r$, then $r_k\le c(r)\cdot k$.
\end{thm}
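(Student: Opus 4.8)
The plan is to reduce the statement to a single positive-fraction selection principle and then to extract a Tverberg partition from it by a cheap greedy argument. Fix the Helly number $h=h(r)$ supplied by the abstract fractional Helly theorem of Holmsen and Lee, and aim to prove the following selection lemma: there is a constant $\gamma=\gamma(r)>0$ such that every finite $P\subseteq X$ with $|P|=N$ contains a point $x$ lying in $conv(S)$ for at least $\gamma\binom{N}{h}$ of the $h$-element subsets $S\subseteq P$. Routing everything through hulls of subsets of the fixed bounded size $h$ is what keeps the final extraction linear: all of the genuine geometric difficulty is pushed into the selection lemma, and the passage from it to $r_k=O(k)$ is purely combinatorial.

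Granting the selection lemma, I would finish as follows. Fix $x$ as above and greedily choose pairwise disjoint $h$-subsets whose hulls all contain $x$. After $j$ sets have been chosen they occupy $jh$ points, and each occupied point lies in at most $\binom{N-1}{h-1}=\tfrac{h}{N}\binom{N}{h}$ of the good subsets; hence a fresh disjoint good subset survives as long as $jh\cdot\tfrac{h}{N}\binom{N}{h}<\gamma\binom{N}{h}$, i.e. as long as $j<\gamma N/h^2$. Thus $P$ contains at least $\gamma N/h^2$ pairwise disjoint subsets whose convex hulls all contain $x$. Choosing $N\ge (h^2/\gamma)\,k$ yields $k$ such subsets, and the leftover points may be distributed arbitrarily among them without destroying the common point $x$, since $conv$ is monotone and adding points only enlarges a hull. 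This produces a partition into $k$ nonempty parts with intersecting hulls, so $r_k\le (h^2/\gamma)\,k=c(r)\cdot k$, as required.

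The heart of the matter, and the step I expect to be the main obstacle, is the selection lemma, for which I would combine the Holmsen--Lee theorem with the counting method of Bukh. Their fractional Helly theorem converts an abundance of intersecting $h$-tuples into a single deep point: if a family $\F$ of convex sets has at least $\alpha\binom{|\F|}{h}$ intersecting $h$-tuples, then $\beta(\alpha)\,|\F|$ of its members share a point. To apply it I would build $\F$ from hulls of bounded subsets of $P$ and manufacture intersecting $h$-tuples from Radon partitions: by Jamison's bound $r_h$ is a finite constant depending only on $r$, so every $r_h$-subset splits into $h$ parts with a common point, each splitting furnishing an intersecting $h$-tuple in $\F$. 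The delicate point -- and exactly where Bukh's careful counting and the clean quantitative form of the Holmsen--Lee bound become essential -- is that a \emph{constant} fraction $\alpha>0$ of all $h$-tuples of $\F$ must be intersecting: seeding in this naive way produces many intersecting tuples in absolute terms but, as a direct computation shows, only a vanishing fraction of them, so one must count and weight the configurations so that the fraction stays bounded below by a constant depending only on $r$. Once the selection lemma is secured with such a $\gamma(r)$, the linear bound follows from the extraction above.
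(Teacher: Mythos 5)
Your architecture matches the paper's: build a family \F of convex hulls of bounded-size subsets of $P$, verify the hypothesis of the Holmsen--Lee fractional Helly theorem so that a positive fraction of \F shares a point, and then extract $k$ pairwise disjoint such subsets by a greedy counting argument. Your extraction step is correct and is essentially the paper's final paragraph (including the observation that leftover points can be distributed freely because hulls are monotone). The problem is that the entire technical content of the theorem lives in the step you explicitly defer as ``the main obstacle'': showing that a \emph{constant} fraction $\alpha(r)>0$ of the $f$-tuples of \F are intersecting, which is the precondition for invoking fractional Helly. You correctly diagnose that seeding one intersecting tuple per Tverberg partition of each $r_h$-subset yields only a vanishing fraction, but you stop there; ``one must count and weight the configurations'' is a statement of the difficulty, not a resolution of it. As written, the proposal is an accurate plan with its central lemma unproved.

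For comparison, the paper's resolution (its Lemma~3) is this. Take \F to be the hulls of all $s$-element subsets with $s=m\cdot r_f$ for a large constant $m$. For $t$ large, almost every $f$-tuple of \F is vertex-disjoint; group those by the $(fs)$-element union of their vertex sets, so that a uniform member of a group is generated by a uniform partition of that union into $f$ parts of size $s$. Split the union at random into $m$ blocks $V_1,\dots,V_m$ of size $fr_f$; restricted to a single block, the tuple induces a uniform equipartition into $f$ parts of size $r_f$, which is a Tverberg partition with probability at least $\binom{fr_f}{r_f,\dots,r_f}^{-1}\ge f^{-fr_f}$ (at least one good partition of the first $r_f$ points of the block exists and extends to equipartitions). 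One successful block already makes the whole $f$-tuple intersecting, and the $m$ blocks behave independently, so the success probability is at least $1-(1-f^{-fr_f})^m$, which exceeds any prescribed $\alpha<1$ for $m$ large. This blocking-and-boosting count is exactly the missing piece. A secondary issue: your selection lemma fixes the subset size at $h$, the fractional Helly number itself, whereas the size that makes the count work is tied to the Tverberg number $r_f$ (a large multiple of it in the paper); nothing in your sketch justifies that hulls of $h$-element subsets have a constant fraction of intersecting $h$-tuples.
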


Our proof combines the methods of Bukh with recent results of Holmsen and Lee \cite{HolmsenLee}.
In particular, we will use the following version of the classical fractional Helly theorem \cite{KL}.

\begin{thm}[Holmsen-Lee \cite{HolmsenLee}]\label{fh}
	If a convexity space $(X,\C)$ has Radon number $r$, then there is an $f$ such that for any $\alpha>0$ there is a $\beta>0$ such that for any finite family \F of convex sets if at least an $\alpha$ fraction of the $f$-tuples of \F are intersecting, then a $\beta$ fraction of \F intersects.
\end{thm}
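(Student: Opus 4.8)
The plan is to derive the fractional Helly property from two ingredients: a bound on the Helly number that comes cheaply from $r$, and a substantial ``density-amplification'' step that carries the real content. Write $h$ for the Helly number of $(X,\C)$, i.e.\ the least $h$ such that a finite subfamily of \C has nonempty intersection whenever each of its $h$-element subfamilies does. The first step is to record the classical fact (Levi; see \cite{Vel}) that a finite Radon number forces $h\le r-1$: if some finite subfamily had empty intersection but each of its $(r-1)$-subfamilies intersected, then by induction on the size of the subfamily one obtains, for each set $C_i$, a point $p_i$ lying in all the \emph{other} sets; applying Radon's partition to any $r$ of these witnesses $p_i$ yields a point common to the whole subfamily, a contradiction. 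With $h$ in hand I set the candidate fractional Helly number to be $f=h+1$ (any $f\ge h$ works for the endpoint), so that the extreme case in which \emph{every} $f$-tuple is intersecting already produces a common point for all of \F by the definition of $h$.

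The crux is to promote the hypothesis ``an $\alpha$ fraction of the $f$-tuples are intersecting'' to the conclusion ``a $\beta$ fraction of \F has a common point,'' with $\beta$ depending only on $\alpha$ and $r$ and not on $|\F|$. It is tempting to argue by pure extremal hypergraph theory---locate a large subfamily \emph{all} of whose $f$-tuples intersect and then invoke $h$---but this cannot work: a dense $f$-uniform hypergraph (here the intersecting $f$-tuples) need not contain a clique of linear size, so the bounded Helly number \emph{by itself} is not enough, and in the abstract setting bounded Helly number is known not to force fractional Helly. What must instead be exploited is that a bounded Radon number constrains the global combinatorial structure of the nerve far more tightly than $h$ does. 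Concretely, I would aim to show that $r$ bounds a Leray-type invariant of every nerve arising in $(X,\C)$, namely a uniform bound $d=d(r)$ on the dimensions in which the nerve and all of its induced subcomplexes can carry reduced homology, by feeding Radon partitions into a Mayer--Vietoris / collapsibility argument; one then concludes $\beta$ from the topological fractional Helly theorem for $d$-Leray complexes (in the spirit of Kalai and Meshulam).

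The main obstacle is precisely this Leray-type bound. Unlike the Euclidean case, where Wegner's theorem makes nerves of convex sets $d$-collapsible essentially for free, here there is no ambient geometry and the only structural tool available is the Radon axiom. I therefore expect the heart of the argument to be a Ramsey-plus-Radon reduction: pass to a large subfamily on which the intersection pattern of the relevant tuples is homogeneous, use repeated Radon partitions inside this homogeneous block to eliminate the ``holes'' in the nerve that would witness high-dimensional homology, and then track the quantitative loss so that the conclusion survives. Keeping the constants under control through the Ramsey step---so that the homogeneous subfamily remains a positive fraction of \F and $\beta$ stays bounded away from $0$ in terms of $\alpha$ and $r$---is the delicate point, and is where I would expect the bulk of the work to lie.
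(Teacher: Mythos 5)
Your proposal has two genuine problems, one of which is a concrete falsity rather than merely a gap. First, the endpoint choice $f=h+1$ cannot work: the fractional Helly number of a convexity space is \emph{not} bounded by any function of the Helly number alone, and the paper's own concluding remarks contain a counterexample. Take $X=\{1,\ldots,q\}^d$ with \C the axis-parallel affine subspaces; there $h=2$, $r=\lfloor\log(d+1)+2\rfloor$, but $f=d+1$. Indeed, for fixed $d\ge 3$ (so fixed $r$) and $f=h+1=3$, the family \F of all $qd$ axis-parallel hyperplanes has a constant fraction (depending only on $d$) of intersecting triples --- any three hyperplanes with pairwise distinct normal directions meet --- yet at most $d$ of the hyperplanes pass through a common point, so the largest intersecting subfamily has fraction $1/q\to 0$. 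So no $\beta=\beta(\alpha,r)$ exists for $f=h+1$; any correct $f$ must be allowed to grow with $r$, and Holmsen--Lee indeed prove $f\le h^{r_h}\le r^{r^{\log r}}$. Your Levi-style derivation of $h\le r-1$ is fine (and is the classical fact the paper cites), but it is the cheap part.

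Second, the step you yourself flag as ``where the bulk of the work lies'' --- bounding a Leray-type invariant of all nerves by a function of $r$ and invoking the Kalai--Meshulam topological fractional Helly theorem --- is not carried out, and it is precisely the entire content of the theorem; a proof sketch that defers it has proved nothing. Moreover, this topological route is not the one that was actually made to work: in an abstract convexity space there is no ambient geometry, no Wegner-type collapsibility, and (as the paper's footnote warns) not even a Carath\'eodory-type theorem follows from bounded $r$, so feeding Radon partitions into a Mayer--Vietoris argument has no known implementation. The actual proof in \cite{HolmsenLee} avoids nerve topology entirely: it first shows that bounded Radon number bounds the \emph{colorful} Helly number \cite{Barany} by a function of $r$, and then applies a purely combinatorial Ramsey-type theorem of Holmsen \cite{Holmsen} on uniform hypergraphs with forbidden substructures to amplify the colorful Helly property into fractional Helly, with the quantitative outcome $\beta=\Omega(\alpha^{r^f})$ quoted in the paper. (Note also that the paper under review does not reprove this theorem --- it imports it from \cite{HolmsenLee} --- so the relevant benchmark is that proof, from which yours differs both in its intermediate invariant and, at the crucial amplification step, in not existing yet.)
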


There are several other connections between the parameters of a convexity space \cite{Vel}; for example, earlier it was already shown \cite{Levi} that in convexity spaces the Helly number is always strictly less than $r$, while in \cite{HolmsenLee} it was also shown that the colorful Helly number \cite{Barany} can be also bounded by some function of $r$ (and this implied Theorem \ref{fh} combined with a combinatorial result from \cite{Holmsen}).\footnote{We would like to point out that a difficulty in proving these results is that the existence of a Carath\'eodory-type theorem is not implied by the existence of $r$.}
It was also shown in \cite{HolmsenLee} that it follows from the work of Alon et al.~\cite{AKMM} that
weak \eps-nets \cite{ABFK} of size $c(\eps,r)$ also exist and a $(p,q)$-theorem \cite{AK} also holds, so understanding these parameters better might lead to improved \eps-net bounds.
It remains an interesting challenge and a popular topic to find new connections among such theorems;
for some recent papers studying the Radon numbers or Tverberg theorems of various convexity spaces, see \cite{dLLHRS17a,dLLHRS17b,dLHMM,FGKVW,Letzter,Patak,Patakova,Soberon}, while for a comprehensive survey, see B\'ar\'any and Sober\'on \cite{BS}.


\bigskip
\noindent
\textbf{\large Restricted vs.\ multiset} 
\smallskip

In case of general convexity spaces, there are two, slightly different definitions of Radon numbers (\cite{Vel}: 5.19).
When in the point set $P$ to be partitioned we do not allow repetitions, i.e., $P$ consist of \emph{different} points, the parameter is called \emph{restricted} Radon number, which we will denote by $r_k^{(1)}$.
If repetitions are also allowed, i.e., we want to partition a multiset, the parameter is called \emph{unrestricted} or \emph{multiset} Radon number, which we will denote by $r_k^{(m)}$.
The obvious connection between these parameters is $r_k^{(1)}\le r_k^{(m)}\le (k-1)(r_k^{(1)}-1)+1$.
In the earlier papers multiset Radon numbers were preferred, while later papers usually focused on restricted Radon numbers; we followed the spirit of the age, so the results in the Introduction were written using the definition of $r_k^{(1)}$, although some of the bounds (like Jamison's or Eckhoff's) are valid for both definitions.
The proof of Theorem \ref{main}, however, also works for multisets, so we will in fact prove the stronger $r_k^{(m)}=O(k)$, and in the following simply use $r_k$ for the multiset Radon number $r_k^{(m)}$.

A similar issue arises in Theorem \ref{fh};
is \F allowed to be a multifamily?
Though not emphasized in \cite{HolmsenLee}, their proof also works in this case and we will use it for a multifamily.
Note that this could be avoided with some cumbersome tricks, like adding more points to the convexity space without increasing the Radon number $r$ to make all sets of a family different, but we do not go into details, as Theorem \ref{fh} anyhow holds for multifamilies.

\section{Proof}

Fix $r$, and a collection of points $P$ with cardinality $tk$, where we allow repetitions and the cardinality is understood as the sum of the multiplicities.
We will treat all points of $P$ as if they were different even if they coincide in $X$, e.g., when taking subsets.

We need to show that if $t\ge c(r)$, then we can partition $P$ into $k$ sets whose convex hulls intersect.
For a fixed constant $s$, define \F to be the family of convex sets that are the convex hull of some $s$-element subset of $P$,
i.e., $\F=\{conv(S)\mid S\subset P, |S|=s\}$.
Since we treat all points of $P$ as different, \F will be a multifamily with $|\F|=\binom{tk}s$.
We will refer to the point set $S$ whose convex hull gave some $F=conv(S)\in\F$ as the \emph{vertices} of $F$ (despite that some of the points might be in the interior of $F$).
Note that for some $S\ne S'$, we might have $conv(S)=conv(S')$, but the vertices of $conv(S)$ and $conv(S')$ will still be $S$ and $S'$; since $P$ is a multiset, it is even possible that $S\cap S'=\emptyset$.

The constants $t$ and $s$ will be set to be large enough compared to some parameters that we get from Theorem \ref{fh} when we apply it to a fixed $\alpha$.
(Our arguments work for any $0<\alpha<1$.)
First we set $s$ to be large enough depending on $\alpha$ and $r_f$ (recall that $r_f\le r^{\log f}$ is a constant \cite{Jamison}),
then $t$ to be large enough depending on $s$ and $\beta$ (that belongs to our chosen $\alpha$).
In particular, we can set $s=\log(\frac 1{1-\alpha_s})r_ff^{fr_f}$ and $t=\max(\frac{s^2}{\beta};\frac{(fs)^2}{k(1-\alpha_t)})$, where $0<\alpha_s,\alpha_t<1$ are any two numbers such that $\alpha_s\cdot\alpha_t=\alpha$.
Also, we note that the proof from \cite{Holmsen,HolmsenLee} gives $f\le r^{r^{\log r}}$ and $\beta=\Omega(\alpha^{r^f})$ for Theorem \ref{fh}.
Combining all these would give an upper bound around of ${r^{r^{r^{\log r}}}}$ for $t$.

Theorem \ref{main} will be implied by the following lemma and Theorem \ref{fh}.

\begin{lem}\label{fhholds}
	An $\alpha$ fraction of the $f$-tuples of \F are intersecting.
\end{lem}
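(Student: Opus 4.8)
The plan is to exhibit a sufficient condition for an $f$-tuple $(conv(S_1),\dots,conv(S_f))$ to be intersecting and to show that it holds for at least an $\alpha$ fraction of all $f$-tuples. Selecting an $f$-tuple uniformly at random is the same as picking $S_1,\dots,S_f$ independently and uniformly among the $s$-element subsets of $P$, so I would phrase everything as lower-bounding a probability by $\alpha=\alpha_s\alpha_t$. Since $conv(Q)\subseteq conv(S)$ whenever $Q\subseteq S$, it suffices to find, inside the $S_i$, pairwise disjoint vertex sets $Q_i\subseteq S_i$ whose convex hulls share a common point: such $Q_i$ certify that the whole $f$-tuple is intersecting. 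I would split the analysis into an easy \emph{disjointness} event, controlled by $\alpha_t$, and the crucial \emph{colourful Radon} event, controlled by $\alpha_s$, and then multiply the two probabilities.

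For the disjointness event I want $S_1,\dots,S_f$ to be pairwise disjoint as subsets of the $tk$ labelled points of $P$. The expected total overlap $\sum_{i<j}|S_i\cap S_j|$ equals $\binom f2 s^2/(tk)$, and the choice $t\ge (fs)^2/(k(1-\alpha_t))$ forces $(fs)^2/(tk)\le 1-\alpha_t$, so by Markov's inequality the $S_i$ are pairwise disjoint with probability at least $\alpha_t$. This is the routine part; its only purpose is to guarantee that the $Q_i$ constructed below can be placed in distinct classes, and that sampling the points behaves as if it were done essentially without interaction across the classes.

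The heart of the matter is to show that, conditioned on disjointness, the desired $Q_i$ exist with probability at least $\alpha_s$, using only the Radon number $r_f$: \emph{any} $r_f$ points of $X$ split into $f$ parts with intersecting convex hulls. If I pick $r_f$ points of $P$ and the labels happen to place the $f$ parts of their Radon partition into $f$ \emph{distinct} classes $S_i$, then those parts serve as the $Q_i$ and we are done. The difficulty is that the Radon partition is dictated by the convexity structure, not by our labels, and—because a Carath\'eodory-type theorem need not exist—there is no way to make a single prescribed set split colourfully. I would therefore cut each $S_i$ into blocks, form many pairwise disjoint $r_f$-point groups across the classes, and treat each group as a trial testing whether its Radon partition is colourful (each part monochromatic and the parts spread over distinct classes). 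A fixed group is coloured colourfully with probability bounded below by a constant $p_0=p_0(f,r_f)\ge f^{-fr_f}$, up to the negligible dependence introduced by sampling without replacement, which the lower bounds $t\ge s^2/\beta$ and $t\ge (fs)^2/(k(1-\alpha_t))$ render harmless. With about $fs/r_f$ disjoint groups and the choice $s=\log\frac1{1-\alpha_s}\,r_f f^{fr_f}$, the probability that \emph{every} trial fails is at most $(1-p_0)^{fs/r_f}\le(1-\alpha_s)^{f}\le 1-\alpha_s$; hence with probability at least $\alpha_s$ some trial yields a colourful Radon partition, producing the $Q_i$.

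Putting the two events together, a uniformly random $f$-tuple is intersecting with probability at least $\alpha_s\alpha_t=\alpha$, which is precisely the statement that an $\alpha$ fraction of the $f$-tuples of \F intersect. The step I expect to be the main obstacle is the colourful Radon event: quantifying the per-trial success probability $p_0$ in a general convexity space, where neither a colourful Radon nor a Carath\'eodory theorem is available, and ensuring the trials are sufficiently independent despite sampling without replacement so that the boosting $(1-p_0)^{fs/r_f}\le 1-\alpha_s$ is legitimate. Taking $t$, and hence $|P|=tk$, large—exactly as in Bukh's method \cite{Bukh}—is what reduces these dependencies to a controllable error.
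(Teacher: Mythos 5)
Your outline matches the paper's proof in its skeleton: a disjointness event controlled by the choice of $t$ (worth $\alpha_t$), followed by a boosting argument over roughly $s/r_f$ disjoint trials controlled by the choice of $s$ (worth $\alpha_s$), each trial exploiting the Tverberg number $r_f$ and succeeding with probability about $f^{-fr_f}$. The disjointness half is fine. But the two points you defer as ``the main obstacle'' --- the per-trial success probability and the independence of the trials --- are exactly where the content of the proof lies, and your proposal does not supply the device that resolves them. As written, your trials are not even well-defined random events: if the $r_f$-point groups are cut out of the already-revealed $S_1,\dots,S_f$, then whether a group's Radon partition is ``colourful'' is determined rather than random; and if the groups are chosen before the colouring is revealed, a group of only $r_f$ points contributes a variable number of points to each class, so the global constraint that every class receives exactly $s$ points couples all the trials, and the product bound $(1-p_0)^{fs/r_f}$ has no justification.

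The paper's fix is a conditioning device. Group the vertex-disjoint $f$-tuples by the $(fs)$-element union $U$ of their vertices; a uniform $f$-tuple with that union is a uniform partition of $U$ into $f$ classes of size $s$. Partition $U$ at random into $m=s/r_f$ blocks $V_1,\dots,V_m$ of size $fr_f$ each (not $r_f$), and condition on the \emph{type}: every class meets every block in exactly $r_f$ points. Under this conditioning the blocks carry independent, uniformly distributed ordered equal partitions into $f$ parts of size $r_f$, and averaging over the random blocks recovers the uniform $f$-tuple. Within one block, the Tverberg partition of its first $r_f$ points into $f$ parts with intersecting hulls can be padded by the remaining $(f-1)r_f$ points to an equal partition, so at least one of the $\binom{fr_f}{r_f,\dots,r_f}\le f^{fr_f}$ equally likely outcomes is intersecting; this yields the clean per-trial bound $f^{-fr_f}$ with no sampling-without-replacement error to control, and the conditional independence across blocks makes $1-(1-f^{-fr_f})^m\ge\alpha_s$ legitimate. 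Without this (or some comparable) device, your boosting step does not go through.
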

\begin{proof}	
	Since $t$ is large enough, almost all $f$-tuples will be vertex-disjoint, thus it will be enough to deal with such $f$-tuples.
	More precisely, the probability of an $f$-tuple being vertex-disjoint is at least $(1-\frac{fs}{tk})^{fs}\ge 1-\frac{(fs)^2}{tk}\ge \alpha_t$ by the choice of $t$.
	We need to prove that at least an $\alpha_s$ fraction of these vertex-disjoint $f$-tuples will be intersecting.
	
	Partition the vertex-disjoint $f$-tuples into groups depending on which $(fs)$-element subset of $P$ is the union of their vertices.
	We will show that for each group an $\alpha_s$ fraction of them are intersecting.
	We do this by generating the $f$-tuples of a group uniformly at random and show that such a random $f$-tuple will be intersecting with probability at least $\alpha_s$.
	For technical reasons, suppose that $m=\frac s{r_f}$ is an integer and partition the $fs$ supporting points of the group randomly into $m$ subsets of size $fr_f$, denoted by $V_1,\ldots,V_m$.
	Call an $f$-tuple \emph{type} $(V_1,\ldots,V_m)$ if each set of the $f$-tuple intersects each $V_i$ in $r_f$ points.
	Since these $V_i$ were picked randomly, it is enough to show that the probability that a $(V_1,\ldots,V_m)$-type $f$-tuple is intersecting is at least $\alpha_s$.
	
 	The $(V_1,\ldots,V_m)$-type $f$-tuples can be uniformly generated by partitioning each $V_i$ into $f$ equal parts of size $r_f$.
	Therefore, it is enough to show that such a random $f$-tuple will be intersecting with probability at least $\alpha_s$.
	Since $|V_i|\ge r_f$, there is at least one partition of the first $r_f$ points of $V_i$ to $f$ parts whose convex hulls intersect.
	Since we can distribute the remaining $(f-1)r_f$ points of $V_i$ to make all $f$ parts equal, we get that when we partition $V_i$ into $f$ equal parts of size $r_f$, the convex hulls of these parts will intersect with probability at least $\binom{fr_f}{r_f,r_f,\ldots,r_f}^{-1}\ge f^{-fr_f}$.
	Since these events are independent for each $i$, we get that the final $f$-tuple will be intersecting with probability at least $1-(1-f^{-fr_f})^m\ge 1-e^{-mf^{-fr_f}}\ge \alpha_s$ by the choice of $s$.
\end{proof}

Therefore, if $s$ is large enough, the conditions of Theorem \ref{fh} are met, so at least $\beta\binom{tk}s$ members of \F intersect.
In other words, these intersecting sets form an $s$-uniform hypergraph \H on $tk$ vertices that is $\beta$-dense.
We need to show that \H has $k$ disjoint edges to obtain the desired partition of $P$ into $k$ parts with intersecting convex hulls.
For a contradiction, suppose that \H has only $k-1$ disjoint edges.
Then every other edge meets one of their $(k-1)s$ vertices.
There are at most $(k-1)s\binom{tk}{s-1}$ such edges, which is less than $\beta\binom{tk}s$ if $(k-1)s<\beta\frac{tk-s+1}{s}$, but this holds 
by the choice of $t$.
This finishes the proof of Theorem \ref{fh}.\qed

\bigskip
\noindent
\textbf{\large Concluding remarks} 
\smallskip

It is an interesting question to study how big $f$ can be compared to $r$ and the Helly number $h$ of $(X,\C)$.
The current bound \cite{HolmsenLee} gives $f\le h^{r_h}\le r^{r^{\log r}}$.
We would like to point out that the first inequality, $f\le h^{r_h}$, can be (almost) strict, as shown by the following example, similar to Example 3 (cylinders) of \cite{MoranY}.
Let $X=\{1,\ldots, q\}^d$ be the points of a $d$-dimensional grid, and let \C consist of all axis-parallel affine subspaces.
(Note that for $q=2$, $X$ will be the vertices of a $d$-dimensional cube, and \C its faces.)
It is easy to check that $h=2$, $r=\lfloor \log(d+1)+2\rfloor$ and $f=d+1$; the last equality follows from that for $\alpha=\frac{d!}{d^d}$ we need $\beta=\frac 1q$ when \F consists of all $qd$ axis-parallel affine hyperplanes (if $q$ is large enough).

It is tempting to assume that Theorem \ref{main} would improve the second inequality, $h^{r_h}\le r^{r^{\log r}}$, as instead of $r_h\le r^{\log h}$ we can use $r_h=O(h)$.
Unfortunately, recall that the hidden constant depended on $r$, in particular, it is around ${r^{r^{r^{\log r}}}}$.
We have a suspicion that this might not be entirely sharp, so a natural question is whether this dependence could be removed to improve $r_k\le {r^{r^{r^{\log r}}}}\cdot k$ to $r_k\le c \cdot r\cdot k$.
This would truly lead to an improvement of the upper bound on $f$ in Theorem \ref{fh} and would be enough for several applications \cite{BS}.

\bigskip
\noindent
\textbf{\large Acknowledgment} 
\smallskip

I would like to thank Boris Bukh and Narmada Varadarajan for discussions on \cite{Bukh}, Andreas Holmsen for calling my attention to the difference between restricted and multiset Radon numbers, espcially for confirming that Theorem \ref{fh} also holds for multisets, and G\'abor Dam\'asdi, Bal\'azs Keszegh, Padmini Mukkamala and G\'eza T\'oth for feedback on earlier versions of this manuscript, especially for fixing the computations in the proof of Lemma \ref{fhholds}.

\end{document}